\documentclass{article}
\usepackage{color, amsmath, amstext, latexsym}
\usepackage{amsfonts}
\usepackage{amssymb}
\usepackage{amsthm}
\usepackage{dsfont}

%----------------------greek letters--------
\newcommand{\al}{{\alpha}}

\newcommand{\ga}{{\gamma}}

\newcommand{\om}{{\omega}}
\newcommand{\Om}{{\Omega}}

\newcommand{\la}{\lambda}

\newcommand{\R}{{\mathbb R}}
\newcommand{\C}{{\mathbb C}}

\newcommand{\realstwo}{\mathbb{R}^2}
\newcommand{\realsthree}{\mathbb{R}^3}

\newcommand{\cO}{\mathcal  O}
\newcommand{\cE}{\mathcal  E}
\newcommand{\sO}{\mathcal  O}

\newcommand{\pd}{{\partial}}
\newcommand{\hf}{\frac12}

\newtheorem{lemma}{Lemma}[section]
\newtheorem{theorem}[lemma]{Theorem}

\newtheorem{prop}[lemma]{Proposition}

\begin{document}

\title{
Remark on an elastic plate interacting with a gas in a semi-infinite tube:
periodic solutions}

\author{Igor
Chueshov\thanks{e-mail:
chueshov@karazin.ua}  
\\ \\
School of  Mathematics and Informatics, 
\\ Karazin Kharkov
National University, \\ Kharkov, 61022,  Ukraine
 }

\maketitle
\begin{abstract}
We consider a conservative system consisting of an elastic plate interacting
with  a gas filling a semi-infinite tube. 
The plate is placed on the bottom of the tube.
The dynamics of the gas velocity potential is governed by the linear wave equation.
The plate displacement satisfies the linear Kirchhoff equation. 
We show that this system possesses an infinite number of periodic solutions with the frequencies tending to infinity. This means 
 that the well-known property of  decaying of local wave energy in tube domains does not hold for the system considered.

\medskip
\noindent 
{\it MSC 2010}: 74K20, 35B10
\par\noindent
{\it Key words}: coupled wave and
plate equations, periodic solutions.
\end{abstract}
\bigskip\par\noindent

Recently there was great interest in the study 
of long-time dynamics of  elastic plates interacting with a flow of gas
(see, e.g., \cite{CDLW1,CDLW2,cl-book,jadea12,supersonic,delay,fereisel,conequil1,conequil2} and the literature cited in those sources).
The corresponding model has the form
\begin{equation}\label{flowplate1}\begin{cases}
u_{tt}+\Delta^2u+\ga u_t +f(u)= \big(\partial_t+U\partial_x\big)\left[\phi \vert_{x_{3}=0}\right] ~~\text { in } \Omega\times \R_+,\\
u(0)=u_0;~~u_t(0)=u_1,\\
u=\Delta  u = 0 ~~ \text{ on } \partial\Omega\times \R_+,
\end{cases}
\end{equation}
where $\phi(x_1,x_2,x_3;t)$ 
solves the problem in
$\realsthree_+\equiv \{x=(x_1;x_2;x_3): x_3>0\}$:
\begin{equation}\label{flowplate2}\begin{cases}
(\partial_t+U\partial_x)^2\phi=\Delta \phi ~~ \text { in } \realsthree_+ \times \R_+,\\
\phi(0)=\phi_0;~~\phi_t(0)=\phi_1 ~~ \text { in } \realsthree_+\\
\pd_{x_3} \phi = 
\begin{cases}
(\partial_t+U\partial_x)u (x_1,x_2)& \text{ on } 
\Om  \times \R_+, \\
0  & \text{ on } 
(\R^2\setminus\overline{\Om})  \times \R_+.
\end{cases}
\end{cases}
\end{equation}
Here above 
 $\Om$ is a bounded smooth domain  
in $\realstwo$ identified with
\[
\{(x_1;x_2; 0): (x_1;x_2)\in\Om \}\subset\overline{\realsthree_+}.
\]  The term
$f(u)$ describes a nonlinear 
force which can be (a)
von Karman type (like as in \cite{cl-book}), or (b) Berger type (like as in \cite{Chu99}), or even (c) generated by some   Nemytskii operator (see, e.g., \cite{cl-kb1,cl-kb2}).  
 The unknown function $u = u(x_1,x_2;t)$  measures the transverse displacement of the plate
at the point $(x_1;x_2)$ and time $t$. The boundary conditions for $u$ means that
the plate is hinged on its edge.
The function  $\phi (x,t) = \phi ( x_{1},x_{2}, x_{3}; t )$  is velocity potential
 of the gas filing
the domain $\realsthree_+$. Here we deal with
 interaction of a plate with a  gas flow moving with the speed $U$ in the direction of the axis $x_1$.
The aerodynamical pressure of the gas  on the plate is given by the term
$p(x,t) =  \nu (\phi_t+U\phi_{x_1}) \vert_{ x_{3}=0}$,
the parameter $\nu > 0$
characterizes the intensity of the interaction between the gas and the
plate. The  transverse displacement $u(x,t)$  has influence on the
gas via boundary condition in (\ref{flowplate2}).
\par 

For recent  surveys of mathematical and applied aspects
of the model above we refer to
\cite{CDLW1,CDLW2}.
Here we only mention  the convergence results in  the subsonic case ($0\le U<1$) which were established in
\cite{conequil1,conequil2} (see also \cite{cl-book} for related facts) and state  
stabilization of solutions to stationary states 
of the system when $t\to  \infty$ under some conditions concerning initial data of the gas velocity potential. 
The corresponding argument requires positivity of the damping parameter $\ga$ and
involves a gradient-type structure of the system
in the case considered.
\par 
In this relation the question (see \cite{chuey}
and \cite[p.694]{cl-book}) arises whether it is possible to obtain  a similar stabilization result
in the absence ($\ga=0$) of the internal damping in the plate. This conjecture is based on the well-known property of 
 the local energy decay for the wave equation in
 $\realsthree$ and some other unbounded domains (see also Proposition~\ref{pr:wave-decay} below).
 
 Our main goal in  this note is to show that 
 unboundness of the wave domain $\cO$ is not sufficient to guarantee stabilization 
 of solutions to equilibria. For this we
consider a  linear plate model without any damping ($\gamma =0$), coupled to the flow via matching velocities. The parameter  $U$ is taken to be zero. We show that in this scenario,  periodic solutions may exist.  
In the case of a specific tubular domain
they are explicitly constructed. Whether the same result  holds for nonlinear  plate is an open question. However, the model indicates the necessity of introducing mechanical damping in the plate model {\em if one expects a strong convergence to equilibria} of the full flow-structure system. 
 \par 

\section{Model}
Let $\Omega $  be a smooth bounded domain in ${\R}^{2}$.
We consider the following problem
\begin{equation}\label{1}
\left\{
\begin{array}{l}
\partial ^{2}_{t}u  + \Delta ^{2}u -
\nu \cdot\partial _{t} \left[\phi \vert_{x_{3}=0}\right]=0,
 \quad x=(x_1;x_2) \in  \Omega,\; t>0, \\ \\
u\vert_{\partial \Omega } = \Delta u\vert_{\partial \Omega } = 0,
\quad  t>0,
\\ \\
u\vert_{t=0} = u_{0}(x), \quad \partial _t u\vert_{t=0} = u_{1}(x),
\quad x=(x_1;x_2) \in  \Omega ,
\end{array}
\right.
\end{equation}
We denote  by $\phi \vert_{x_{3}=0}$  the trace
of a function $\phi(x_1,x_2,x_3,t)$ in $\Omega\times \R_+\times {\R}_+$
which solves the problem
\begin{equation}\label{2}
\left\{
\begin{array}{l}
\partial _{tt} \phi  = \Delta\phi  , \quad
x \in  \sO_{+} \equiv
\{x= (x_{1};x_{2};x_{3})\;:\; (x_1;x_2)\in {\Om},\; x_{3} >0 \},
\\ \\
\partial_{x_{3}}\phi
=\partial _{t}u(x_{1},x_{2},t),\;
 (x_1;x_2)\in\Omega, \;x_3=0,\; t>0,\\ \\
 \phi=0,\; (x_1;x_2)\in\partial \Omega,\; x_3>0,\; t>0,
 \\ \\
\phi \vert_{t=0} = \phi _{0}(x), \quad
\partial _{t}\phi \vert_{t=0} =
\phi _{1}(x),\quad x \in  \sO_{+}.
\end{array}
\right.
\end{equation}
The function $u = u(x,t)$  is the transverse displacement of the hinged plate. 
The function  $\phi (x,t) = \phi ( x_{1},x_{2}, x_{3}; t )$  is velocity potential
 of the gas filing
the tube $\sO_+$. 
The pressure of the gas  on the plate is given by the term
$p(x,t) =  \nu \cdot \phi_t \vert_{ x_{3}=0}$,
where $\nu > 0$.

In the case of bounded domains $\sO$
 systems like (\ref{1}) and (\ref{2} was studied 
by many authors (see the discussion and the references in  \cite{cl-book}  and  \cite{cmbs}).
\par 
 Below we use the notation $H^s(D)$ for the Sobolev space of order $s$ on a domain $D$ in $\R^d$, $d=2,3$.
\par   
We start with the following assertion.

\begin{prop}[Well-posedness]\label{pr:wp}
	Assume that 
	\[
	\phi_0\in H^1(\cO_+),~~\phi_1\in L_2(\cO_+),~~u_0\in (H^2\cap H^1_0)(\Om),
~~u_1\in L_2(\Om).
	\]
	Then there exists a unique couple $\{u;\phi\}$
	of function 
	\[
	u\in C^1(\R_+; L_2(\Om))\cap 
	C(\R_+; (H^2\cap H^1_0)(\Om))
	\]
and
	\[
	\phi\in C^1(\R_+; L_2(\cO_+))\cap 
	C(\R_+; H^1(\cO_+))
	\]	
	solving  \eqref{1} and \eqref{2} in the sense of distributions. Moreover this solution satisfies the energy preservation law of the form
\[
\cE(t)\equiv E^{gas}_{\cO_+}(\phi_t(t),\phi(t))+
\nu^{-1}E^{plate}_{\Om}(u_t(t),u(t))=\cE(0),~~
\mbox{for all}~~ t>0,
\]	
where we use the notations 
\begin{equation}\label{loc-energy}
	E^{gas}_{D}(\phi_t,\phi)=\hf\int_D\left[
|\phi_t(x,t)|^2 +|\nabla\phi(x,t)|^2\right] dx
\end{equation}
for every $D\subseteq\cO_+\subset\R^3$ and
\[
E^{plate}_{\Om}(u_t,u)=
\hf\int_\Om\left[
|u_t(x,t)|^2 +|\Delta u(x,t)|^2\right] dx.
\]
\end{prop}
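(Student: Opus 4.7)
I would recast \eqref{1}--\eqref{2} as a first-order abstract Cauchy problem $\dot Y=\mathbb{A}Y$ on the energy Hilbert space
\[
\mathbb{H}=(H^2\cap H^1_0)(\Om)\times L_2(\Om)\times V\times L_2(\cO_+),
\]
with $V=\{\phi\in H^1(\cO_+):\phi=0$ on $\pd\Om\times\R_+\}$, endowed with the inner product whose squared norm equals $2\cE$. For $Y=(u,v,\phi,\psi)$ set $\mathbb{A}Y=(v,\,-\Delta^{2}u+\nu\psi|_{x_3=0},\,\psi,\,\Delta\phi)$, with domain $D(\mathbb{A})$ consisting of those $Y\in\mathbb{H}$ such that $v\in(H^2\cap H^1_0)(\Om)$, $\psi\in V$, $\Delta\phi\in L_2(\cO_+)$, the Neumann trace condition $\pd_{x_3}\phi|_{x_3=0}=v$ holds (understood in $H^{-1/2}(\Om)$ via Green's formula), and $\Delta^{2}u-\nu\psi|_{x_3=0}\in L_2(\Om)$. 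The central move is that the fluid--structure matching is wired into $D(\mathbb{A})$, rather than appearing as an external forcing.

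\textbf{Skew-adjointness.} Green's identities give $(\mathbb{A}Y,Y)_{\mathbb{H}}=0$: the biharmonic contributions cancel under the hinged conditions, while the two surface integrals on $\Om\times\{0\}$, namely $\int_\Om v\,\psi|_{x_3=0}$ from the plate side and $-\int_\Om\psi|_{x_3=0}\,\pd_{x_3}\phi|_{x_3=0}$ from the wave side, cancel by the coupling. To solve the resolvent equation $(I-\mathbb{A})Y=F$, I eliminate $v=u-F_1$ and $\psi=\phi-F_3$; what remains is an elliptic system for $(u,\phi)\in(H^2\cap H^1_0)(\Om)\times V$ whose weak form has symmetric energy part $\nu^{-1}\!\int_\Om(u\tilde u+\Delta u\,\Delta\tilde u)+\int_{\cO_+}(\phi\tilde\phi+\nabla\phi\cdot\nabla\tilde\phi)$ plus antisymmetric cross-trace coupling $\int_\Om(u\,\tilde\phi|_{x_3=0}-\tilde u\,\phi|_{x_3=0})$. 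The antisymmetric part drops out on the diagonal, and the remainder is coercive on $(H^2\cap H^1_0)(\Om)\times V$, so Lax--Milgram supplies a unique weak solution. Elliptic regularity, combined with $\Delta\phi=\phi-F_3-F_4\in L_2(\cO_+)$ and the natural matching condition, returns $(u,\phi)\in D(\mathbb{A})$. The same argument applies to $(I+\mathbb{A})$, so $\mathbb{A}$ is skew-adjoint.

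\textbf{Conclusion.} Stone's theorem then yields a unitary $C_0$-group $\{e^{t\mathbb{A}}\}_{t\in\R}$ on $\mathbb{H}$. For $Y_0=(u_0,u_1,\phi_0,\phi_1)\in\mathbb{H}$, $Y(t)=e^{t\mathbb{A}}Y_0$ is the unique mild solution and has exactly the component-wise $C\cap C^1$ regularity asserted. Density of $D(\mathbb{A})$ in $\mathbb{H}$ and continuity of the group show that $Y$ solves \eqref{1}--\eqref{2} in the sense of distributions. Unitarity, $\|Y(t)\|_{\mathbb{H}}=\|Y_0\|_{\mathbb{H}}$, is exactly the energy identity $\cE(t)=\cE(0)$.

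\textbf{Main obstacle.} The delicate point is giving rigorous meaning to the coupling: for a generic $\psi\in L_2(\cO_+)$ no boundary trace is a priori defined, and the Neumann trace $\pd_{x_3}\phi|_{x_3=0}$ is only available once $\Delta\phi\in L_2(\cO_+)$. Both issues are absorbed by placing the matching condition inside $D(\mathbb{A})$ and transferring it to the variational formulation through Green's formula; once this is set up cleanly, everything else is routine Hilbert-space semigroup theory, fully analogous to the bounded-domain treatment in \cite{cl-book,cmbs}.
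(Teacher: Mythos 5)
Your semigroup argument is correct: the choice of energy inner product, the skew-symmetry computation in which the two surface integrals over $\Om\times\{0\}$ cancel through the matching condition built into $D(\mathbb{A})$, the Lax--Milgram solution of the resolvent equations $(I\mp\mathbb{A})Y=F$ (the antisymmetric trace coupling vanishing on the diagonal), and Stone's theorem all go through. The only points worth making explicit are that $\|\nabla\phi\|_{L_2(\cO_+)}$ is an equivalent norm on $V$ thanks to the Poincar\'e inequality in the cross-section $\Om$ (so that $2\cE$ really is a squared norm on $\mathbb{H}$ despite containing no $\|\phi\|_{L_2}$ term), and that no elliptic regularity is actually needed to land back in $D(\mathbb{A})$: membership follows directly from the weak formulation once $\Delta\phi=\phi-F_3-F_4\in L_2(\cO_+)$ is read off. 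This is, however, a genuinely different route from the one the paper elaborates. The paper's opening sentence does invoke exactly the abstract theory you are reconstructing (via \cite{cmbs} and \cite[Ch.~6]{cl-book}), but the argument it actually sketches exploits the cylindrical geometry: expanding $u$ and $\phi$ in the Dirichlet eigenbasis $\{e_k\}$ of $\Om$ decouples the system into a sequence of one-dimensional half-line wave problems \eqref{1.2} coupled to oscillator ODEs \eqref{1.3}, each conserving its modal energy $E_k$, and the solution is then assembled from a priori estimates by the standard compactness method. Your approach buys generality (it nowhere uses the product structure of $\cO_+$) and delivers a unitary group, hence backward solvability and the energy identity for free; the paper's decomposition is more elementary and, more importantly, is precisely the computation that is reused to construct the periodic solutions of Theorem~\ref{th:periodic}.
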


\begin{proof}
We can apply well-known general result reported
in \cite{cmbs}, see also 
\cite[Chapter 6]{cl-book}, where nonlinear versions of similar problems are discussed.
However we can also give more direct argument based 	on some symmetry of this linear problem and involving 
the variables  separation.
We sketch the corresponding argument below.
\par

Let $\{ e_{k} \}$  be an orthonormal basis in $L_2 ( \Omega  )$ consisting of
eigenvectors of the problem
\begin{equation}\label{Dir-spec}
   \Delta w +\lambda  w=0, \quad
w\vert_{\partial \Omega } = 0,
\end{equation}
and $0<\lambda_1\le\lambda_2\le ...$ the corresponding eigenvalues.
We are looking for solutions to problem (\ref{1}) and (\ref{2}) in
the following form
\begin{equation}\label{1.1}
u(x_1,x_2,t)=\sum_{k=1}^\infty u_k(t)e_k(x_1,x_2),
\quad
\phi(x_1,x_2,x_3,t)=\sum_{k=1}^\infty \phi_k(t, x_3)e_k(x_1,x_2).
\end{equation}
It is clear that the function $\phi_k(t,z)$
solves the problem
\begin{equation}\label{1.2}
\left\{
\begin{array}{l}
\partial _{tt} \phi_k - \partial_{zz}\phi_k +\lambda_k\phi_k=0 , \quad
z>0,\; t>0,
\\ \\
 \partial_{z}\phi_k =\dot u_k(t),\; \;z=0,\; t>0,
 \\ \\
\phi_k \vert_{t=0} = \phi _{0k}(z), \quad
\partial _{t}\phi_k \vert_{t=0} =
\phi _{1k}(z),\quad z>0,
\end{array}
\right.
\end{equation}
where $u_k(t)$ satisfies the equation
\begin{equation}\label{1.3}
\ddot u_k +\lambda_k^2 u_k -\nu \partial_t\phi_k(0,t)=0, \quad
u_k \vert_{t=0} = u_{0k}, \quad
\dot u_k \vert_{t=0} =
u _{1k}.
\end{equation}
It is easy to show that for each $k$ problem \eqref{1.2} and \eqref{1.3}
has a unique  solution $(\phi_k(t),u_k(t))$ 
for which we have the following energy balance relation
\[
E_k(t)
=E_k(s),~~~ t\ge s,
\]
where the energy $E_k$ of the $k$-mode  has the form
\begin{align*}
E_k(t) = &\hf \int_0^\infty\left[ |\pd_t\phi_k(t,z)|^2 + |\pd_z\phi_k(t,z)|^2
+\lambda_k |\phi_k(t,z)|^2\right] dz\\ &+ \frac{1}{2\nu}\left[ |\dot u_k|^2 +\lambda_k^2 |u_k|^2\right].
\end{align*}
This observations  allow us to obtain appropriate a priori estimates and conclude the proof by the standard compactness method.
\end{proof}

\section{Dynamics}
We start with the following assertion that shows
a local energy decay in the case when the bottom 
$\Om$ of the cylinder $\cO_+$ is {\em rigid}.
This means that we consider the wave dynamics only. This dynamics is described by the following equations  
\begin{equation}\label{lin-eq-2}
\left\{
\begin{array}{l}
\partial _{tt} \phi  = \Delta\phi  , \quad
x \in  \sO_{+} ,
\\ \\
\partial_{x_{3}}\phi
=0,\;
 (x_1;x_2)\in\Omega, \;x_3=0,\; t>0,\\ \\
 \phi=0,\; (x_1;x_2)\in\partial \Omega,\; x_3>0,\; t>0,
 \\ \\
\phi \vert_{t=0} = \phi _{0}(x), \quad
\partial _{t}\phi \vert_{t=0} =
\phi _{1}(x),\quad x \in  \sO_{+}.
\end{array}
\right.
\end{equation}

\begin{prop}[Local energy decay]\label{pr:wave-decay}
	Assume that 
	$\phi_0\in H^1(\cO_+)$ and $\phi_1\in L_2(\cO_+)$.
	Then
problem \eqref{lin-eq-2} has a unique variational solution $\phi$ which belongs to the class	
	\[
	\phi\in C^1(\R_+; L_2(\cO_+))\cap 
	C(\R_+; H^1(\cO_+)).
	\]	
	This  this solution satisfies the energy preservation law of the form
\[
 E^{gas}_{\cO_+}(\phi_t(t),\phi(t))
=E^{gas}_{\cO_+}(\phi_1,\phi_0),~~
\mbox{for all}~~ t>0.
\]	
Moreover, we have decaying of $\phi$ as $t\to\infty$ in the local energy norm, i.e.,
\begin{equation}\label{wave-decay}
\lim_{t\to +\infty} E^{gas}_{\cO_+^R}(\phi_t(t),
\phi(t))=0~~ \mbox{for every}~~ R>0,
\end{equation}
where $E^{gas}_{\cO_+^R}$ is given by \eqref{loc-energy}  with
\[
\sO_{+}^R \equiv
\{x= (x_{1};x_{2};x_{3})\;:\; (x_1;x_2)\in {\Om},\; 0< x_{3} <R \}.
\]
\end{prop}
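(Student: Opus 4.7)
The plan is to mimic the separation-of-variables argument from the proof of Proposition~\ref{pr:wp}, turning the three-dimensional problem in the tube into a countable family of one-dimensional Klein--Gordon equations on the half-line, establishing local energy decay mode-by-mode, and then combining the decays via an $\eps/2$ argument based on conservation of the total energy.

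First I would write $\phi(x_1,x_2,z,t)=\sum_k \phi_k(t,z)\, e_k(x_1,x_2)$, where $\{e_k\}$ are the Dirichlet eigenfunctions of $-\Delta$ on $\Om$ with eigenvalues $\la_k$. Each mode then satisfies
\[
\pd_{tt}\phi_k -\pd_{zz}\phi_k +\la_k \phi_k=0,\quad z>0,\qquad \pd_z\phi_k(t,0)=0,
\]
with the Fourier coefficients of $\phi_0,\phi_1$ as initial data. Orthonormality of $\{e_k\}$ in $L_2(\Om)$ and Parseval yield
\[
E^{gas}_{\cO_+^R}(\phi_t,\phi)(t)=\sum_k \hf \int_0^R\bigl[|\pd_t\phi_k|^2+|\pd_z\phi_k|^2+\la_k|\phi_k|^2\bigr]\, dz,
\]
while the full mode energy $E_k=\hf\int_0^\infty[|\pd_t\phi_k|^2+|\pd_z\phi_k|^2+\la_k|\phi_k|^2]\, dz$ is conserved, exactly as in Proposition~\ref{pr:wp}. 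Well-posedness in the stated classes follows by the same compactness argument.

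The heart of the matter is local energy decay for each fixed mode $k$. I would extend $\phi_k(t,\cdot)$ evenly across $z=0$ (consistent with the Neumann condition) to obtain the Klein--Gordon equation on all of $\R$ with mass $\sqrt{\la_k}>0$. The self-adjoint operator $-\pd_z^2+\la_k$ on $L_2(\R)$ has purely absolutely continuous spectrum $[\la_k,\infty)$, so by a standard RAGE-type argument---or equivalently by non-stationary phase applied to the Fourier representation with phase $\xi z\pm\sqrt{\xi^2+\la_k}\,t$, whose $\xi$-derivative is bounded away from zero outside a neighborhood of $\xi=0$ for $|z|\le R$ and $t$ large---the solution decays to zero on any bounded interval in the energy norm:
\[
\lim_{t\to\infty}\int_0^R\bigl[|\pd_t\phi_k(t,z)|^2+|\pd_z\phi_k(t,z)|^2+\la_k|\phi_k(t,z)|^2\bigr]\, dz=0.
\]
This is the step I expect to require the most care, since one needs strong (not merely weak) decay of the \emph{local energy}; I would first establish it on a dense class of smooth, compactly supported data and then extend using the uniform-in-time energy bound.

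Finally, to pass from modewise decay to the full statement, I would use an $\eps/2$ argument. Given $\eps>0$, summability $\sum_k E_k=E^{gas}_{\cO_+}(\phi_1,\phi_0)<\infty$ lets me choose $N$ with $\sum_{k>N} E_k<\eps/2$. Each of the finitely many modes $k\le N$ enjoys the decay above, so for $t$ large enough the partial sum $\sum_{k\le N}$ of the local mode energies is below $\eps/2$. Combining the two bounds gives $E^{gas}_{\cO_+^R}(\phi_t,\phi)(t)<\eps$ for $t$ sufficiently large, which is the required limit.
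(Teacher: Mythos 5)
Your proposal follows essentially the same route as the paper: separation of variables in the cross-section, even extension across $x_3=0$ to reduce each mode to a one-dimensional Klein--Gordon equation on $\R$, mode-wise local energy decay, and then a density/energy-conservation argument to handle rough data and the infinite mode sum (your $\eps/2$ truncation is exactly the paper's ``approximation of initial data'' step). The only substantive difference is that the paper outsources the one-dimensional decay to the bounds of \cite{walker}, whereas you sketch it via stationary phase; just note that a bare RAGE argument is not quite enough for strong pointwise-in-time decay of the full local energy, and that the phase does have a critical point at $\xi=0$ (vanishing group velocity), so the genuine content there is a stationary-phase estimate giving $O(t^{-1/2})$ --- which is precisely what \cite{walker} supplies.
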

\begin{proof}
The existence and uniqueness of solutions to \eqref{lin-eq-2} is obvious (we can use 
the same idea as in Proposition \ref{pr:wp}, for instance). It is also clear that the energy relation is satisfied. Thus we only need to establish the property in \eqref{wave-decay}.
\par
Extending the initial data as even functions 
in the variable $x_3$ on the whole $x_3$-axis  we can consider the wave equation  in
the domain  
\[
\sO=
\{x= (x_{1};x_{2};x_{3})\;:\; (x_1;x_2)\in {\Om},\; -\infty < x_{3} <+\infty \}.
\]
with the Dirichlet boundary conditions on $\pd\cO$. Now we can separate variables as above an
apply the same idea as in \cite{walker} to prove \eqref{wave-decay} for \textit{localized} initial data $\phi_0$ and $\phi_1$. In fact the article \cite{walker} contains exactly this statement for the case when $\Om=(0,\pi)\times (0,\pi)$.
The method suggested in \cite{walker}  relies on the presentation of the solution $\phi$ in the form
\[
\phi(x_1,x_2,x_3,t)=\sum_{m,n=1}^\infty \phi_{mn}(t, x_3)e_{mn}(x_1,x_2),
\]
where $e_{mn}(x_1,x_2)=2\pi^{-1}\sin ma_1\sin nx_2$ are solutions to the spectral problem \eqref{Dir-spec} for $\Om=(0,\pi)\times (0,\pi)$ and $\phi_{mn}(t,z)$ solves the equation
\[
\partial _{tt} \phi_{mn} - \partial_{zz}\phi_{mn} +(m^2+n^2)\phi_{mn}=0 , \quad
z\in \R  ,\; t>0,
\]
Establishing appropriate bounds for $\phi_{mn}$ (see \cite{walker}) one can
prove the desired result for $\Om=(0,\pi)\times (0,\pi)$.
  The calculations given in \cite{walker} can be easily extended to the case of general domains $\Om$.
\par 
Then using approximation procedure for initial data  and the energy relation we can obtain the result for every pair 
$( \phi_1;\phi_0)\in L_2(\cO_+)\times H^1(\cO_+)$.
\end{proof}

The  decay property of the local wave energy 
demonstrated in Proposition \ref{pr:wave-decay}
is not valid
for the coupled system in \eqref{1} and \eqref{2}.
More precisely, we show that problem   
\eqref{1} and \eqref{2} possesses infinite number of periodic solutions with different periods.

\begin{theorem}\label{th:periodic}
	Let $\{\la_k\}$ be the eigenvalues of problem 
	\eqref{Dir-spec} and $\{e_k\}$ be the corresponding eigen-basis.
Then there exist sequences $\{\om_k\}$ nad $\{\al_k\}$ of positive numbers with properties
\begin{equation}\label{lim-prop}
\lim_{k\to\infty}\big[\om^2_k-\la_k\big]=0
~~\mbox{and}~~
\lim_{k\to\infty}\al_k\la_k=\nu 	
\end{equation}
such that the functions
\begin{equation}\label{phi-k}
\varphi^k (x_1,x_2,x_3;t)= \big[ A_k \cos\om_kt+
B_k\sin\om_kt\big] e^{-\al_k x_3} e_k(x_1,x_2) 
\end{equation} and 
\begin{equation}\label{u-k}
	 u^k (x_1,x_2;t)=  -\frac{\al_k}{\om_k}\big[ A_k \sin\om_kt- B_k
\cos\om_kt\big]  e_k(x_1,x_2),	
\end{equation}
where $A_k$ and $B_k$ are arbitrary real numbers,
solve problem 
\eqref{1} and \eqref{2} with appropriate initial data. Each trajectory $(\varphi^k;\varphi_t^k;u^k;u_t^k)$ is Lyapunov stable in the phase space ${\mathcal H}= H^1(\cO_+)\times 	L_2(\cO_+)\times H^1(\Om)\times L_2(\Om)$.
\end{theorem}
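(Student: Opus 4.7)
The proof will proceed by a separation of variables ansatz built on the eigen-basis $\{e_k\}$ of the Dirichlet Laplacian on $\Om$, guided by the modal decomposition already used in Proposition~\ref{pr:wp}. First I would seek a single-mode solution
\[
\phi(x_1,x_2,x_3,t)=T(t)Z(x_3)e_k(x_1,x_2),\qquad u(x_1,x_2,t)=S(t)e_k(x_1,x_2).
\]
Plugging into $\pd_{tt}\phi=\Delta\phi$ and taking the time-periodic ansatz $T(t)=A\cos\om t+B\sin\om t$ (so that $T''=-\om^2 T$) forces $Z''=(\la_k-\om^2)Z$. To get $H^1(\cO_+)$-regularity I choose the exponentially decaying branch $Z(x_3)=e^{-\al x_3}$ with $\al>0$ and the constraint
\begin{equation}\label{eq:plan-W}
\al^2=\la_k-\om^2.
\end{equation}
The hinged boundary condition $\phi=0$ on $\pd\Om\times\R_+$ is automatic since $e_k$ vanishes there.

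Next I would impose the matching condition $\pd_{x_3}\phi|_{x_3=0}=\pd_t u$, which yields $-\al T(t)=\dot S(t)$, hence (up to an additive constant, which must vanish for a pure oscillation) $S(t)=-(\al/\om)[A\sin\om t-B\cos\om t]$. This is exactly the relation appearing in \eqref{u-k}. Substituting $\phi$ and $u$ into the plate equation $\ddot u+\Delta^2 u=\nu\pd_t\phi|_{x_3=0}$ and using $\Delta^2 e_k=\la_k^2 e_k$ gives, after matching the time-harmonic factor,
\begin{equation}\label{eq:plan-P}
\al(\la_k^2-\om^2)=\nu\om^2.
\end{equation}
So the problem reduces to finding $(\al,\om)\in(0,\infty)^2$ solving the pair \eqref{eq:plan-W}--\eqref{eq:plan-P}.

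Eliminating $\om^2=\la_k-\al^2$ from \eqref{eq:plan-P} turns the system into the single cubic
\[
P_k(\al):=\al^3+\nu\al^2+(\la_k^2-\la_k)\al-\nu\la_k=0.
\]
Since $P_k(0)=-\nu\la_k<0$ and $P_k(\al)\to+\infty$, the intermediate value theorem gives a positive root $\al_k$; for $\la_k\ge 1$ the derivative $P_k'$ is strictly positive on $(0,\infty)$, so the root is unique. Then $\om_k^2:=\la_k-\al_k^2$ is positive provided $\al_k^2<\la_k$, which I would verify directly from $P_k(\sqrt{\la_k})>0$ for large $k$. For the asymptotics I would insert the trial scaling $\al_k\sim c/\la_k$ into $P_k$; the leading balance $(\la_k^2-\la_k)\al_k\sim\nu\la_k$ forces $c=\nu$, giving $\al_k\la_k\to\nu$ and $\om_k^2-\la_k=-\al_k^2\to 0$, which are the two limits in \eqref{lim-prop}. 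The main (but modest) obstacle is showing the cubic really has a positive root with the predicted asymptotics uniformly in $k$; both follow from a perturbation argument about the explicit leading term $\al_k=\nu/\la_k+O(\la_k^{-2})$.

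Finally, regularity and Lyapunov stability. The functions in \eqref{phi-k}--\eqref{u-k} clearly belong to $C^1(\R_+;\mathcal{H})$ because the factor $e^{-\al_k x_3}e_k$ lies in $H^1(\cO_+)$ and $e_k\in(H^2\cap H^1_0)(\Om)$, so they are genuine solutions in the class of Proposition~\ref{pr:wp} with initial data read off at $t=0$. For Lyapunov stability I would exploit the linearity of \eqref{1}--\eqref{2}: the difference of any two solutions is again a solution and its energy $\cE(t)$ is conserved. Since $\phi$ vanishes on $\pd\Om\times\R_+$, the two-dimensional Poincar\'e inequality in $(x_1,x_2)$ implies $\|\phi\|_{L_2(\cO_+)}\le C\|\nabla\phi\|_{L_2(\cO_+)}$, so $E^{gas}_{\cO_+}$ is equivalent to the $H^1(\cO_+)\times L_2(\cO_+)$ norm; combined with $\|u\|_{H^2}\lesssim\|\Delta u\|_{L_2}$ (hinged plate), the conserved energy dominates the $\mathcal{H}$-norm of the difference. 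Hence a small perturbation of initial data of $(\varphi^k;\varphi_t^k;u^k;u_t^k)$ remains small in $\mathcal{H}$ for all $t$, which is precisely Lyapunov stability.
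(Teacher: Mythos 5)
Your proposal is correct and follows essentially the same route as the paper: a single-mode separation of variables with the ansatz $e^{-\al x_3}e_k$ and a time-harmonic factor, leading to the pair of relations $\om^2=\la_k-\al^2$ and $\al(\la_k^2-\om^2)=\nu\om^2$, with Lyapunov stability deduced from energy conservation. In fact you supply details the paper leaves implicit (the cubic $P_k$, the intermediate value argument with $P_k(\sqrt{\la_k})=\la_k^{5/2}>0$ guaranteeing $\om_k^2>0$, the asymptotics $\al_k\la_k\to\nu$, and the equivalence of the conserved energy with the $\mathcal H$-norm), so nothing further is needed.
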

\begin{proof}
Let us look for solutions to   \eqref{1.2} and \eqref{1.3} of the form
\[
\phi_k(t,z)=e^{ i\om t} e^{-\al z},~~~ u_k(t) =a e^{ i\om t}
\]
with $\al>0$ and $\om,a\in \C$.
The substitution in \eqref{1.2} and \eqref{1.3} gives us the relations
\begin{align*}
& -\om^2-\al^2+\la_k=0, \\
& \al=i\om a, \\
& a(-\om^2+\la^2_k)+ i\nu \om =0.
\end{align*}
This implies that $a=-i\al\om^{-1}$ and also
\begin{align}\label{om-alpha}
 \om^2 =\la_k-\al^2, ~~~
 \om^2 =\frac{\al}{\al+\nu} \la^2_k.
\end{align}
One can see for every $k$ there exists unique solution $(\om^2_k,\al_k)$ to \eqref{om-alpha}.
It is also easy to find  that 
\[ (\om^2_k,\al_k)\sim (\la_k, \nu\la_k^{-1})
~~\mbox{when}  ~~ k\to+\infty
\] 
 in the sense    of \eqref{lim-prop}.
This implies the structure of a solution  written in \eqref{phi-k} and \eqref{u-k}.
\par 
Stability properties of solutions  follow from the energy preservation law.
\end{proof}
Theorem \ref{th:periodic} shows that the elasticity of the bottom $\Om$ of the cylinder $\cO_+$ destroy the local energy decay property which we observe in the case of rigid bottom (see Proposition~\ref{pr:wave-decay}).
\par
\medskip \par
We conclude this note with several open questions which, we believe, are important for understanding of long-time dynamics of flow-structure systems.

\subsubsection*{Open Questions:}
\begin{itemize}
  \item 
  Can we show that the minimal subspace 
  in ${\mathcal H}$ containing all solutions
   $(\varphi^k;\varphi_t^k;u^k;u_t^k)$ is asymptotically  stable?
  Is this subspace  a global minimal attractor?
  If not, what is a real candidate on the role of global attractor for \eqref{1} and \eqref{2}? 
  \item What  can we  say about  stability and spectral properties of the generators of  $C_0$ semigroups
  generated by \eqref{1} and \eqref{2} and its dissipative perturbation?
  For instance, is it possible to stabilize the system by introducing internal damping in the plate component only?
\end{itemize}
These questions are important not only for linear  dynamics, but also
 for  nonlinear perturbations of \eqref{1} and \eqref{2}.


\begin{thebibliography}{10}
%\bibitem{BV92}
%A.V.Babin and M.I.Vishik, {\it Attractors of Evolution Equations},
%North-Holland, Amsterdam,1992.

\bibitem{Chu99}
I.Chueshov, {\it Introduction to the Theory of Infinite-Dimensional Dissipative
  Systems}, Acta, Kharkov, 1999 (in Russian); English translation:
  Acta, Kharkov, 2002; see also http://www.emis.de/monographs/Chueshov/
  
  \bibitem{chuey}
I. Chueshov, Dynamics of von Karman plate in a potential flow of gas: rigorous results and unsolved problems, Proceedings of the 16th IMACS World Congress, Lausanne (Switzerland), 2000, pp. 1--6.

  
  

\bibitem{CDLW1}
I. Chueshov, E. Dowell, I. Lasiecka and J. T. Webster, Von Karman plate in a gas flow: recent results and conjectures,
Preprint ArXiv:1509.00808. 

\bibitem{CDLW2}
I. Chueshov, E. Dowell, I. Lasiecka and J. T. Webster, Mathematical Aeroelasticity: A Survey, Preprint ArXiv:1512.07292. 

\bibitem{cl-kb1}
I. Chueshov and I. Lasiecka, Existence, uniqueness of weak solutions and global attractors
for a class of nonlinear 2D Kirchhoff-Boussinesq models, {\it Discr. Cont. Dyn. Sys.}, 15 (2006),
777--809.

\bibitem{cl-book}
I. Chueshov, I. Lasiecka, {\it Von Karman  Evolution Equations. Well-posedness and Long Time Behavior}, Monographs, {\em Springer-Verlag},  2010.

\bibitem{cl-kb2}
I. Chueshov, I. Lasiecka,
On global attractor  for  2D  Kirchhoff-Boussinesq  model
with  supercritical nonlinearity,
\emph{Comm.  PDE}, 36 (2011), 67--99.



\bibitem{jadea12}
 I. Chueshov and I. Lasiecka, Generation of a semigroup and hidden regularity in nonlinear subsonic flow-structure interactions with absorbing boundary conditions. \emph{Jour. Abstr. Differ. Equ. Appl.} 3 (2012), pp. 1--27.

\bibitem{supersonic}
\newblock I. Chueshov, I. Lasiecka and J. T. Webster,
\newblock {Evolution semigroups for supersonic flow-plate interactions},
\newblock \emph{J.  Diff. Eqs.}, 254 (2013),  1741--1773.


\bibitem{delay}
\newblock I. Chueshov, I. Lasiecka and J. T. Webster,
\newblock Attractors for delayed, non-rotational von Karman plates with applications to flow-structure interactions without any damping,
\newblock {\em Comm. in PDE}, {\bf 39} (2014), pp. 1965--1997.


\bibitem{fereisel}
\newblock I. Chueshov, I. Lasiecka and J. T. Webster,
\newblock Flow-plate interactions: Well-posedness and long-time behavior,
\newblock \emph{Discrete Contin. Dyn. Syst. Ser. S, Special Volume: New Developments in Mathematical Theory of Fluid Mechanics}, \textbf{7} (2014), pp. 925--965.
%
%\bibitem{Ha88}
%J.K.Hale, Asymptotic Behavior of Dissipative Systems, Mathematical
%Surveys and Monographs, Vol 25., AMS, Providence, RI,1988.
%
\bibitem{cmbs} I. Lasiecka,
{\it Mathematical Control Theory of Coupled PDE's}, CMBS-NSF
Lecture Notes, SIAM Publications,  2001.



\bibitem{conequil1}
I. Lasiecka and J. T. Webster,
 Eliminating flutter for clamped von Karman plates immersed in subsonic flows,
 {\em Comm. Pure Appl. Math.}, 13 (2014), 1935--1969,
  Updated version (May, 2015): 
Arxiv:1409.3308v5.

\bibitem{conequil2}
 I. Lasiecka and J. T. Webster,
Feedback stabilization of a fluttering panel in an inviscid subsonic potential flow, Preprint Arxiv:1506.05704.
\bibitem{walker}
H.F. Walker,
Some remarks on the local energy decay of solutions of the
initial-boundary value problem for the
wave equation in unbounded domains,
\emph{J.  Diff. Eqs.}, 23 (1997), 459--471. 
\end{thebibliography}
\end{document}